\title[Global well-posedness for a slightly supercritical SQG equation]{Global well-posedness for a slightly supercritical surface quasi-geostrophic equation}
\date{\today}
\author{Michael Dabkowski}
\address{Department of Mathematics, University of Wisconsin Madison, 480 Lincoln Dr., Madison, WI 53706}
\email{dabkowsk@math.wisc.edu}
\author{Alexander Kiselev}
\address{Department of Mathematics, University of Wisconsin Madison, 480 Lincoln Dr., Madison, WI 53706}
\email{kiselev@math.wisc.edu}
\author{Vlad Vicol}
\address{Department of Mathematics, The University of Chicago, 5734 University Ave., Chicago, IL 60637}
\email{\tt vicol@math.uchicago.edu}
\theoremstyle{plain}
\newtheorem{theorem}{Theorem}[section]
\newtheorem{definition}[theorem]{Definition}
\newtheorem{lemma}[theorem]{Lemma}
\newtheorem{proposition}[theorem]{Proposition}
\theoremstyle{definition}
\newtheorem{remark}[theorem]{Remark}
\numberwithin{equation}{section}
\renewcommand\hat{\widehat}
\def\RR{{\mathbb R}}
\def\TT{{\mathbb T}}
\def\DD{\mathcal D}
\def\LL{\Lambda}
\def\omb{\omega_B}
\def\zz{\zeta}
\begin{document}


\begin{abstract}
We use a nonlocal maximum principle to prove the global existence of smooth solutions for a slightly supercritical surface quasi-geostrophic equation. By this we mean that the velocity field $u$ is obtained from the active scalar $\theta$ by a Fourier multiplier with symbol $i \zz^\perp |\zz|^{-1} m(|\zz|)$, where $m$ is a smooth increasing function that grows slower than $\log \log |\zz|$ as $|\zz|\rightarrow \infty$.
\end{abstract}


\dedicatory{Dedicated to Peter Constantin on occasion of his 60th birthday}

\subjclass[2000]{35Q35,76U5}

\keywords{Surface quasi-geostrophic equation, supercritical, global regularity, active scalar, modulus of continuity, nonlocal maximum principle.}

\maketitle

\section{Introduction} \label{sec:intro}

The surface quasi-geostrophic equation (SQG) has recently been a
focus of research efforts by many mathematicians. It is probably
the simplest physically motivated evolution equation of fluid
mechanics for which, in the supercritical regime, it is not known
whether solutions stay regular or can blow up. The equation is
given by
\begin{align}
&\partial_t \theta + (u \cdot \nabla) \theta + \LL^\alpha \theta = 0,\,\,\,\,\theta(\cdot,0) = \theta_0 \nonumber \\
& u = \nabla^\perp \LL^{-1} \theta  \nonumber 
\end{align}
on $(x,t) \in \TT^2 \times [0,\infty)$, where $\LL =
(-\Delta)^{1/2}$. The SQG equation appeared in the mathematical
literature for the first time in \cite{CMT}, and since then has
attracted significant attention, in part due to certain similarities with
three dimensional Euler and Navier-Stokes equations.
The equation has $L^\infty$
maximum principle \cite{R,CC}, which makes the $\alpha =1$
dissipation critical. It has been known since \cite{R} that the
equation has global smooth solutions (for appropriate initial
data) when $\alpha
>1.$ The global regularity in the critical case has been settled
independently by Kiselev-Nazarov-Volberg \cite{KNV} (in the
periodic setting) and Caffarelli-Vasseur \cite{CV} (in the whole
space as well as in the local setting). A third proof of the same
result was provided recently in \cite{KN2}. All these proofs are quite
different. The method of \cite{CV} is inspired by DeGiorgi
iterative estimates, while the approach of \cite{KN2} uses appropriate
set of test functions and estimates on their evolution. The method
of \cite{KNV}, on the other hand, is based on a new technique
which can be called a nonlocal maximum principle. The idea is to
prove that the evolution \eqref{EQ:1} preserves a
certain modulus of continuity $\omega$ of the solution. The control is
strong enough to give a uniform bound on $\|\nabla
\theta\|_{L^\infty}$ in the critical case, which is sufficient for
global regularity.

In the supercritical case, the only results available so far (for large initial data) have
been on conditional regularity and finite time regularization of solutions. For instance, it was shown by Constantin and Wu~\cite{CW} that if the solution is $C^{1-\alpha}$, then it is smooth. Finite time
regularization has been proved by Silvestre \cite{Silvestre} for $\alpha$ sufficiently close to $1$, and for the whole dissipation
range $0<\alpha<1$ by Dabkowski \cite{D} (with an alternative
proof of the latter result given in \cite{K2}). The issue of
global regularity in the case $\alpha\in(0,1)$ remains an outstanding open problem.

Our goal here is to advance global regularity very slightly into the supercritical regime for the SQG equation. For technical reasons (and inspired by \cite{CCW}),
it is more convenient for
us to introduce supercriticality in the velocity $u$ rather than in the dissipation. Namely,
let $m(\zz) = m(|\zz|)$ be a smooth, radial, non-decreasing function on $\RR^2$, such that $m(\zz)\geq 1$ for all $\zz \in \RR^2$. We shall consider the active scalar equation,
\begin{align}
&\partial_t \theta + (u \cdot \nabla) \theta + \LL \theta = 0,\,\,\,\,\theta(\cdot,0) = \theta_0  \label{EQ:1}\\
& u = \nabla^\perp \LL^{-1} m(\LL) \theta \label{EQ:2}
\end{align}
on $(x,t) \in \TT^2 \times [0,\infty)$, where $m(\LL) \theta$ is defined by its Fourier transform $(m(\LL)\theta)^{\widehat{}}(\zz) = m(\zz) \hat{\theta}(\zz)$. Note that $m \equiv 1$ gives us the usual critical SQG equation. We shall consider symbols $m(\zz)$ which for all sufficiently large $|\zz|$  satisfy the growth condition
\begin{align}
&\lim_{|\zz|\rightarrow \infty} \frac{m(\zz)}{\ln \ln|\zz|} = 0\label{eq:m:cond:1}.
\end{align}
In addition we require that
\begin{align}
\lim_{|\zz|\rightarrow \infty} \frac{|\zz| m'(\zz)}{m(\zz)} = 0 \label{eq:m:cond:2}
\end{align}
and that the symbol $m$ is of H\"ormander-Mikhlin type, i.e., there exists $C>0$ such that
\begin{align}
|\zz|^{k} | \partial_{\zz}^{k} m(\zz) | \leq C m(\zz) \label{eq:m:cond:3}
\end{align}
holds for all $\zz \neq 0$, and all $k \in \{0,\ldots,d+2\}$.  The main result of this paper is:
\begin{theorem}[\bf Slightly supercritical SQG]
\label{thm:SQG}
Assume that $\theta_0 \in C^\infty(\TT^2)$. If the symbol $m$ satisfies \eqref{eq:m:cond:1}--\eqref{eq:m:cond:3}, then there exists a unique global $C^\infty$ smooth solution $\theta$ of \eqref{EQ:1}--\eqref{EQ:2}.
\end{theorem}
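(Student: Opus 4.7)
My plan is to adapt the nonlocal maximum principle of Kiselev--Nazarov--Volberg \cite{KNV} to the slightly supercritical velocity law \eqref{EQ:2}. Local existence and uniqueness of a smooth solution on some maximal interval $[0,T_*)$ is standard, and the usual continuation criterion reduces Theorem \ref{thm:SQG} to a uniform a priori bound on $\|\nabla\theta(\cdot,t)\|_{L^\infty}$ on bounded time intervals. To obtain this bound I would construct a family $\{\omega_\kappa\}_{\kappa}$ of stationary moduli of continuity, each concave, increasing, with $\omega_\kappa'(0^+)<\infty$ and eventually exceeding $2\|\theta_0\|_{L^\infty}$, so that (a) every smooth $\theta_0$ obeys some $\omega_\kappa$ strictly, and (b) the set of functions with modulus of continuity $\omega_\kappa$ is preserved by the flow \eqref{EQ:1}--\eqref{EQ:2}. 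The finite slope $\omega_\kappa'(0^+)$ then yields the desired gradient bound.

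To verify (b) I would argue by contradiction: at a first breakdown time $t_*$, there must exist $x\neq y$ with $|x-y|=\xi$ such that $\theta(x,t_*)-\theta(y,t_*)=\omega_\kappa(\xi)$ and the time derivative of this quantity is non-negative. Using \eqref{EQ:1}, this produces the scalar inequality
\[
|u(x,t_*)-u(y,t_*)|\,\omega_\kappa'(\xi) + D_\LL(\xi) \ge 0,
\]
where $D_\LL(\xi)$ is the explicit negative dissipation quantity of \cite{KNV} for the critical $\LL$, controlled by the second differences $\omega_\kappa(\xi\pm 2\eta)-\omega_\kappa(\xi)$. The genuinely new element is bounding the transport term when $u=\nabla^\perp\LL^{-1}m(\LL)\theta$. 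Splitting the singular integral representation of $\nabla^\perp \LL^{-1}m(\LL)$ at the scale $\xi$ and invoking the H\"ormander--Mikhlin hypothesis \eqref{eq:m:cond:3} together with the derivative estimate \eqref{eq:m:cond:2}, I expect a bound of the schematic form
\[
|u(x)-u(y)| \lesssim m(1/\xi)\left(\omega_\kappa(\xi) + \xi\int_\xi^\infty \frac{\omega_\kappa(\eta)}{\eta^2}\,d\eta\right),
\]
namely the critical-case estimate amplified by the extra symbol factor $m(1/\xi)$.

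The main obstacle is therefore to engineer $\omega_\kappa$ so that the resulting scalar inequality closes uniformly in $\xi$. Near $\xi=0$ the KNV prescription $\omega_\kappa(\xi)=\xi-\xi^{3/2}$ gives a dissipation bound of order $-\xi\log(1/\xi)$ that easily absorbs $m(1/\xi)$, requiring only the mild estimate $m(1/\xi)=o(\log(1/\xi))$, far weaker than \eqref{eq:m:cond:1}. The delicate regime is $\xi$ moderate to large, where the standard critical tail $\omega_\kappa'(\xi)=\gamma/\xi$ (yielding $\omega_\kappa(\xi)\sim\gamma\log\xi$) would be spoiled by the $m(1/\xi)$ factor. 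I would instead prescribe a tail in which $\omega_\kappa'(\xi)$ decays faster by a slowly varying factor $\Psi(\xi)\to\infty$ tailored to $m$; the hypothesis $m(\zz)=o(\log\log|\zz|)$ from \eqref{eq:m:cond:1} is precisely the threshold that lets $\Psi$ be chosen so that $m(1/\xi)\,\omega_\kappa'(\xi)\,\xi$ stays below the dissipation contribution while $\omega_\kappa$ still climbs to $2\|\theta_0\|_{L^\infty}$ at a finite scale. A routine matching of the small-$\xi$ and large-$\xi$ pieces should then produce a one-parameter family of smooth $\omega_\kappa$ satisfying both (a) and (b), completing the proof of Theorem \ref{thm:SQG} via the continuation criterion.
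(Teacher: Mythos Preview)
Your overall strategy---local theory plus continuation criterion, then a family of preserved moduli of continuity via the breakthrough argument---is exactly the paper's. However, two points in your outline are not merely missing details but are actually wrong, and they concern precisely the place where the supercriticality bites.

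First, the schematic velocity estimate you expect is not a valid upper bound. In the correct estimate (the paper's Lemma~\ref{lemma:u:MOC}), the multiplier stays inside the integrals:
\[
|u(x)-u(y)| \le A\left(\int_0^\xi \frac{\omega(\eta)\,m(\eta^{-1})}{\eta}\,d\eta + \xi\int_\xi^\infty \frac{\omega(\eta)\,m(\eta^{-1})}{\eta^2}\,d\eta\right).
\]
You may pull $m(\xi^{-1})$ out of the far-field integral (since $m(\eta^{-1})\le m(\xi^{-1})$ for $\eta\ge\xi$), but not out of the near-field one, where $m(\eta^{-1})\ge m(\xi^{-1})$. In particular, for $\xi$ beyond the matching scale $\delta$, the near-field integral contributes a term of order $m(\delta^{-1})\,\omega(\xi)\ln(\xi/\delta)$, not $m(\xi^{-1})\,\omega(\xi)$.

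Second, and relatedly, your diagnosis of the large-$\xi$ difficulty is off: since $m$ is radially non-decreasing with $m\ge 1$, the factor $m(1/\xi)$ is bounded as $\xi\to\infty$, so it cannot ``spoil'' anything there. The genuine obstruction, and the place where \eqref{eq:m:cond:1} enters, is the interaction between the family parameter and the tail. In the paper the family is indexed by the initial slope $B=\omega_B'(0^+)$, with a matching scale $\delta(B)$ fixed by $B\,\delta\, m(\delta^{-1})=\kappa$. Closing the large-$\xi$ balance forces the tail $\omega_B'(\xi)=\gamma\big/[\xi\,(4+\ln(\xi/\delta))\,m(\delta^{-1})]$, the factor $m(\delta^{-1})^{-1}$ being exactly what cancels the $m(\delta^{-1})$ emerging from the near-field velocity integral. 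Integrating gives $\omega_B(a)\sim \dfrac{\gamma}{m(\delta^{-1})}\ln\ln(a/\delta)$, and this tends to $\infty$ as $B\to\infty$ (equivalently $\delta\to 0$) if and only if $m(r)=o(\ln\ln r)$. Thus the $\ln\ln$ threshold is not a pointwise balance in $\xi$; it is the condition that the one-parameter family actually covers every smooth $\theta_0$. Your sketch does not capture this mechanism, and the tail ansatz ``$\gamma/\xi$ divided by a slowly varying $\Psi(\xi)$ tailored to $m$'' will not close without the $m(\delta^{-1})^{-1}$ dependence on the family parameter.
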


\begin{remark} \label{rem:conditions:1}
The condition \eqref{eq:m:cond:2} can be improved to require only $\lim_{|\zz|\rightarrow \infty} |\zz| m'(\zz) / m(\zz) <1$,
but is adapted here for the sake of simplicity.
\end{remark}

The result we prove here is reminiscent of the slightly
supercritical Navier-Stokes regularity result of Tao \cite{Tao1}.
The challenge in the SQG case is that while regularity for
critical Navier-Stokes is easy to prove by energy method, there is
no similarly simple proof of regularity for the critical SQG. The
criticality of the SQG equation is controlled by the $L^\infty$
norm, and the order of differentiation is the same in the
nonlinearity and dissipation term. This makes global regularity
for large data surprising at the first look. All three proofs of global regularity
for critical SQG are somewhat subtle and involved.
Scaling plays a crucial role in all existing proofs. The
main contribution of this paper is to show that one can advance,
at least a little, beyond the critical scaling.

To prove Theorem~\ref{thm:SQG}, we rely on the original method of
\cite{KNV}. This method is based on constructing a modulus of
continuity $\omega(\xi)$, Lipshitz at zero and growing at
infinity, which is respected by the critical SQG evolution: if the
initial data $\theta_0$ obeys $\omega,$ so does the solution
$\theta(x,t)$ for every $t>0.$ By scaling, in the critical regime any rescaled modulus
$\omega_B(\xi) = \omega(B\xi)$ is also preserved by the evolution.
This allows, given smooth initial data $\theta_0,$ to find $B$
such that $\theta_0$ obeys $\omega_B$ and thus, due to
preservation of $\omega_B,$ gain sufficient control of solution
for all times. The unboundedness of $\omega$ is crucial for this
argument; applying it with bounded $\omega$ would correspond to
controlling only initial data of limited size. It appears that the
maximal growth of $\omega$ one can afford in the critical SQG case
is a double logarithm, dictated by balance of nonlinear and
dissipative term estimates. The idea of the proof of
Theorem~\ref{thm:SQG}, and the key observation of this paper, is
that it is possible to trade some of this growth in $\omega$ for
a slightly rougher velocity $u$ (or, likely, slightly weaker
dissipation). In the process, one loses critical scaling, but the argument
can be made to work by manufacturing a family
of moduli $\omega_B$ preserved by the evolution which are no longer
a single rescaled modulus.

We anticipate that the approach we develop here will have other
applications. In particular, it can be applied to a slightly
supercritical Burgers equation. In this case, one can expect to prove global
regularity for a more singular equation, supercritical by almost a
logarithmic multiplier. This is due to the existence of moduli
with logarithmic growth conserved by the evolution. Consideration
of the Burgers equation, as well as applications to modified SQG,
and the case of supercritical dissipation is postponed to a subsequent
publication \cite{DKV}.

\section{Preliminaries} \label{sec:prelim}

The local and conditional regularity for the SQG-type equations is by now standard. 
In particular, we have
\begin{proposition}[\bf Local existence of smooth solution] \label{prop:LWP}
Given $\theta_0 \in H^s(\TT^2)$, for some $s>1$, there exists
$T>0$ and a solution $\theta(\cdot,t) \in C([0,T],H^s) \cap
C^\infty((0,T]\times \TT^2)$ of \eqref{EQ:1}--\eqref{EQ:2}.
Moreover, the solution may be continued as a smooth solution
beyond $T$ as long as $ \Vert \nabla
\theta\Vert_{L^1(0,T;L^\infty(\TT^2))} < \infty$.
\end{proposition}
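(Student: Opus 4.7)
The plan is to follow the standard program for dissipative active scalar equations: construct regularized solutions, derive a closed $H^s$ energy estimate uniform in the regularization, and pass to the limit via Aubin--Lions compactness. Concretely, one can mollify the transport term and add a small hyperviscosity $\varepsilon(-\Delta)^2$, for which global smooth solutions exist by classical parabolic theory. The subsequent $C^\infty$ smoothing for $t>0$ is obtained by bootstrapping higher derivatives against the dissipation recovered in the $H^s$ identity.

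The central \emph{a priori} estimate is obtained by applying $\Lambda^s$ to \eqref{EQ:1}, pairing with $\Lambda^s \theta$, and using $\nabla\cdot u=0$, which gives
\[
\frac{1}{2}\frac{d}{dt}\|\theta\|_{H^s}^2 + \|\Lambda^{s+1/2}\theta\|_{L^2}^2 = -\int_{\TT^2} \bigl([\Lambda^s, u\cdot\nabla]\theta\bigr)\, \Lambda^s\theta\, dx.
\]
A Kato--Ponce commutator estimate bounds the right-hand side by $C\bigl(\|\nabla u\|_{L^\infty}\|\theta\|_{H^s} + \|u\|_{H^s}\|\nabla\theta\|_{L^\infty}\bigr)\|\theta\|_{H^s}$. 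The velocity is related to $\theta$ by the Fourier multiplier $i\zeta^\perp|\zeta|^{-1}m(\zeta)$; the Mikhlin-type derivative bounds \eqref{eq:m:cond:3} together with the sub-polynomial growth of $m$ forced by \eqref{eq:m:cond:1} (which yields $m(\zeta)\leq C_\eta(1+|\zeta|)^\eta$ for any $\eta>0$) give
\[
\|u\|_{H^s}\leq C_\eta\,\|\theta\|_{H^{s+\eta}}, \qquad \|\nabla u\|_{L^\infty}\leq C_\eta\,\|\theta\|_{H^s}
\]
for $s$ sufficiently large (with Sobolev embedding on $\TT^2$); for $s>1$ one uses a short interpolation argument. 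Choosing $\eta<\tfrac12$, the $\eta$-derivative loss is absorbed into $\|\Lambda^{s+1/2}\theta\|_{L^2}^2$ via Young's inequality, producing a differential inequality $\tfrac{d}{dt}\|\theta\|_{H^s}^2 \leq C P(\|\theta\|_{H^s})$ which integrates up to a time $T=T(\|\theta_0\|_{H^s})>0$.

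For the continuation criterion, the same energy inequality is revisited, but $\|\nabla u\|_{L^\infty}$ is now estimated by a Brezis--Gallouet--Wainger type logarithmic interpolation (again valid because $m$ contributes only a sub-polynomial loss), yielding
\[
\frac{d}{dt}\|\theta\|_{H^s}^2 \leq C\bigl(1+\|\nabla\theta\|_{L^\infty}\bigr)\log\!\bigl(e+\|\theta\|_{H^s}\bigr)\|\theta\|_{H^s}^2.
\]
A Beale--Kato--Majda style Grönwall argument then shows persistence of $H^s$ regularity so long as $\|\nabla\theta\|_{L^1(0,T;L^\infty)}<\infty$. The main technical point, and the only place where the slightly supercritical velocity enters, is checking that conditions \eqref{eq:m:cond:1}--\eqref{eq:m:cond:3} suffice to run the multiplier and commutator estimates; this works precisely because $m$ grows slower than any fractional power, so the loss it introduces is always absorbable by the $\Lambda$ dissipation and never obstructs the local theory.
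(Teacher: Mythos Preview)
The paper does not actually prove this proposition: it simply remarks that the analogous statement for standard SQG with critical or supercritical dissipation is established in \cite{Dong}, and that inserting the multiplier $m$ into the velocity ``does not create any essential difficulties in the argument.'' Your sketch is precisely the standard energy/commutator program that underlies such results, so there is nothing to compare against beyond noting that your outline is consistent with what the paper has in mind.

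Two small points are worth flagging. First, for $1<s\leq 2$ neither $\|\nabla u\|_{L^\infty}$ nor $\|\nabla\theta\|_{L^\infty}$ is controlled by $\|\theta\|_{H^s}$ via Sobolev embedding on $\TT^2$, so your ``short interpolation argument'' is carrying nontrivial weight; one really must exploit the half-derivative gained from the dissipation (e.g.\ interpolate against $H^{s+1/2}$ after distributing derivatives in the commutator, or work in $L^p$-based spaces). Second, for the sharp $L^1_t L^\infty_x$ continuation criterion the right-hand side must depend at worst linearly (up to a logarithm) on $\|\nabla\theta\|_{L^\infty}$; the route $\|u\|_{H^s}\lesssim\|\theta\|_{H^{s+\eta}}$ followed by Young's inequality produces a power $\|\nabla\theta\|_{L^\infty}^{1/(1-\eta)}$, which strictly speaking gives only an $L^{1/(1-\eta)}_t L^\infty_x$ criterion. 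This is harmless for the application (the modulus-of-continuity argument ultimately yields $\nabla\theta\in L^\infty_{t,x}$), but if you want the $L^1_t$ statement as written you should handle the $\|\Lambda^s u\|_{L^2}\|\nabla\theta\|_{L^\infty}$ term without passing through Young, and obtain the logarithmic bound on $\|\nabla u\|_{L^\infty}$ directly from a Brezis--Gallouet inequality adapted to the multiplier $R^\perp m(\Lambda)$.
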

The proof of a similar result with standard SQG velocity and critical or supercritical dissipation can be found, for example, in \cite{Dong}.
The addition of the multiplier $m$ into $u$ does not create any essential difficulties in the argument.

\begin{definition}[\bf Modulus of continuity] \label{def:MOC}
We call a function $\omega \colon (0,\infty) \rightarrow
(0,\infty)$ a modulus of continuity if $\omega$ is increasing,
continuous, concave, piecewise $C^2$ with one sided derivatives,
and it additionally satisfies $\omega'(0+) = \infty$ or
$\omega''(0+) = -\infty$. We say that a smooth function $f$ 
obeys the modulus of continuity $\omega$ if $|f(x) - f(y)| <
\omega(|x-y|)$ for all $x\neq y$.
\end{definition}
We recall that if $f\in C^\infty(\TT^2)$ obeys the modulus $\omega$, then $\Vert \nabla f \Vert_{L^\infty} < \omega'(0)$. In addition, observe that a function $f\in C^\infty(\TT^2)$ automatically obeys any modulus of continuity $\omega(\xi)$ that lies above the function $\min\{\xi \Vert \nabla f \Vert_{L^\infty}, 2 \Vert f \Vert_{L^\infty} \}$.

We will construct a family of moduli of continuity $\omega_B$ that will be preserved by the evolution. To prove this nonlocal maximum principle, we will use the following
outline. The proofs of Lemmas~\ref{break} and \ref{lemma:d:MOC} below can be found in \cite{KNV}.
\begin{lemma}[\bf Breakthrough scenario]\label{break}
Assume $\omega$ is a modulus of continuity such that
$\omega(0+)=0$ and $\omega''(0+)= -\infty.$ Suppose that the
initial data $\theta_0$ obeys $\omega.$ If the solution
$\theta(x,t)$ violates $\omega$ at some positive time, then there must
exist $t_1>0$ and $x \neq y \in \TT^2$ such that
\[ \theta(x,t_1) - \theta(y,t_1) = \omega(|x-y|), \]
and $\theta(x,t)$ obeys $\omega$ for every $0 \leq t<t_1.$
\end{lemma}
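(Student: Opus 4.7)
The plan is to identify $t_1$ as the first time at which strict obedience of $\omega$ breaks down, to extract a touching pair $(x_*, y_*)$ by compactness and continuity, and to rule out the degenerate possibility $x_* = y_*$ using the curvature hypothesis $\omega''(0+) = -\infty$. This last step will be the main obstacle, and it is precisely where the assumption on $\omega''$ is needed.

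\medskip

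\textbf{Identifying $t_1$.} I would set
\[
t_1 \;=\; \sup\bigl\{ T \geq 0 : \theta(\cdot,t) \text{ obeys } \omega \text{ for all } 0 \leq t \leq T \bigr\}.
\]
Strict obedience of $\theta_0$, the asymptotic $\omega(\xi)/\xi \to \omega'(0+) \geq \|\nabla\theta_0\|_{L^\infty}$ as $\xi \to 0^+$, and compactness of $\TT^2$ together yield $\inf_{x \neq y}\bigl(\omega(|x-y|) - |\theta_0(x) - \theta_0(y)|\bigr) > 0$. Continuity of $\theta$ in $C^1$ on short time intervals (Proposition~\ref{prop:LWP}) then ensures $t_1 > 0$; the violation hypothesis gives $t_1 < \infty$. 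For every $t < t_1$, obedience implies $\|\nabla\theta(\cdot,t)\|_{L^\infty} \leq \omega'(0+) < \infty$, so by the continuation criterion of Proposition~\ref{prop:LWP} the solution is smooth through $t_1$.

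\medskip

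\textbf{Extracting the pair.} By definition of $t_1$, there is a sequence $t_n \downarrow t_1$ and points $x_n, y_n \in \TT^2$ (relabeling coordinates) with $\theta(x_n, t_n) - \theta(y_n, t_n) \geq \omega(|x_n - y_n|)$. Compactness of $\TT^2$ lets me extract a subsequence with $(x_n, y_n) \to (x_*, y_*)$, and joint continuity of $\theta$ and $\omega$ gives
\[
\theta(x_*, t_1) - \theta(y_*, t_1) \;\geq\; \omega(|x_* - y_*|).
\]
Passing to the limit $t \uparrow t_1$ in the strict obedience $|\theta(u,t) - \theta(v,t)| < \omega(|u - v|)$ produces non-strict obedience at $t_1$. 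Combined with the previous inequality, equality holds for $(x_*, y_*)$ provided $x_* \neq y_*$, and it remains to rule out the collapse $x_* = y_*$.

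\medskip

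\textbf{Diagonal exclusion.} Suppose for contradiction $x_* = y_*$. Writing $h_n := |x_n - y_n| \to 0$ and $e_n := (x_n - y_n)/h_n$, I pass to a further subsequence so that $e_n \to e \in S^1$; then the mean-value theorem together with $\omega(h_n)/h_n \to \omega'(0+)$ forces $\partial_e \theta(x_*, t_1) \geq \omega'(0+)$. The decisive input is $\omega''(0+) = -\infty$: from
\[
\omega(h) - \omega'(0+)\,h \;=\; \int_0^h \!\int_0^s \omega''(r)\, dr\, ds,
\]
for every $K > 0$ there exists $\delta > 0$ with $\omega(h) \leq \omega'(0+)\, h - \tfrac{K}{2} h^2$ for $0 < h < \delta$. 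Taylor expanding $\theta(\cdot, t_1)$ along $e$ at $x_*$ gives
\[
\theta(x_* + h e, t_1) - \theta(x_*, t_1) \;\geq\; \omega'(0+)\, h - C h^2
\]
for small $h > 0$, where $C$ depends only on $\|D^2\theta(\cdot, t_1)\|_{L^\infty}$. Choosing $K > 2C$ produces a strict inequality $\theta(x_* + h e, t_1) - \theta(x_*, t_1) > \omega(h)$ for all sufficiently small $h$, contradicting non-strict obedience at $t_1$. Therefore $x_* \neq y_*$, and $(x_*, y_*)$ is the desired touching pair.
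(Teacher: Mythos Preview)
The paper does not prove this lemma; it refers the reader to \cite{KNV}, and your argument is precisely the standard one from that reference: define $t_1$ as the first breakdown time, extract a limiting pair by compactness, and use $\omega''(0+)=-\infty$ together with a second-order Taylor expansion of $\theta$ to exclude the diagonal. So the approach matches.

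There is one genuine slip. The claim
\[
\inf_{x\neq y}\bigl(\omega(|x-y|)-|\theta_0(x)-\theta_0(y)|\bigr)>0
\]
is false: both terms tend to $0$ as $|x-y|\to 0$, so the infimum is exactly $0$. What you actually need (and what your own diagonal-exclusion step would give when applied at $t=0$) is the strict gradient bound $\|\nabla\theta_0\|_{L^\infty}<\omega'(0+)$; you only wrote $\geq$. With that strict inequality in hand, choose $\epsilon>0$ small enough that $\omega(h)/h>\|\nabla\theta_0\|_{L^\infty}+\eta$ for $0<h<\epsilon$ and some $\eta>0$; then $C^1$-continuity in time handles the strip $|x-y|<\epsilon$, while genuine compactness of $\{|x-y|\geq\epsilon\}$ and the strict inequality at $t=0$ handle the rest. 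This repairs the ``$t_1>0$'' step without changing anything else, and the remainder of your proof is fine.
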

Let us consider the breakthrough scenario for a modulus $\omega.$ A simple computation shows that
\begin{align}
\partial_{t} \left( \theta(x,t) - \theta(y,t) \right)|_{t=t_1} &= u \cdot \nabla \theta (y,t_1) - u \cdot \nabla \theta (x,t_1) + \LL\theta(y,t_1) - \LL\theta(x,t_1)\notag\\
&\leq |u(x,t_1)-u(y,t_1)| \omega'(\xi) +  \LL\theta(y,t_1) - \LL\theta(x,t_1)\label{eq:MOC}.
\end{align}
If we can show that the expression in \eqref{eq:MOC} must be strictly negative, we obtain a contradiction: $\omega$ cannot be broken, and hence it is preserved.
To estimate \eqref{eq:MOC} we need
\begin{lemma}[\bf Modulus of continuity for the drift velocity]\label{lemma:u:MOC}
Assume that $\theta$ obeys the modulus of continuity $\omega$, and that the drift velocity is given as $u =  \nabla^\perp \LL^{-1} m(\LL) \theta$. Then $u$ obeys the modulus of continuity $\Omega$ defined as
\begin{align}
\Omega(\xi) =  A \left( \int_{0}^{\xi} \frac{\omega(\eta)m( \eta^{-1})}{\eta} d\eta
+ \xi \int_{\xi}^{\infty} \frac{\omega(\eta)m( \eta^{-1})}{\eta^2} d\eta\right) \label{eq:u:moc}
\end{align}
for some positive constant$A\geq 1$  that only depends on the function $m$.
\end{lemma}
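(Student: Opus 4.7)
The plan is to realize $u = T\theta$ with $T := \nabla^\perp \Lambda^{-1} m(\LL)$ as convolution against a kernel $K$, and to bound $u(x) - u(y)$ by a standard near-field/far-field splitting. Since the symbol $a(\zz) = i\zz^\perp |\zz|^{-1} m(|\zz|)$ is the product of the Riesz symbol $i\zz^\perp/|\zz|$ and the H\"ormander--Mikhlin multiplier $m$ (via \eqref{eq:m:cond:3}), the Leibniz rule gives $|\partial^\alpha a(\zz)| \leq C\, m(|\zz|) |\zz|^{-|\alpha|}$ for $|\alpha| \leq d+2$. A standard Littlewood--Paley computation (bound the contribution from frequencies $|\zz|\sim 2^j$ by integration by parts) then yields that $K$ is odd, $C^\infty$ off the origin, and satisfies
\[
|K(w)| \leq C\, \frac{m(|w|^{-1})}{|w|^2}, \qquad |\nabla K(w)| \leq C\, \frac{m(|w|^{-1})}{|w|^3},
\]
with constants depending only on $m$. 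Oddness gives $\mathrm{p.v.}\int K = 0$, so
\[
u(x) - u(y) = \int \bigl[K(x-z) - K(y-z)\bigr]\,[\theta(z) - c]\, dz
\]
holds in principal-value sense for any constant $c$ (chosen separately on each region below).

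Let $\xi = |x-y|$ and let $p = (x+y)/2$. In the \emph{far field} $\{|z-p| \geq 2\xi\}$ every point of the segment from $x-z$ to $y-z$ has norm $\geq |z-p|/2$, so the mean value theorem gives $|K(x-z) - K(y-z)| \leq C\xi\, m(|z-p|^{-1})/|z-p|^3$. Choosing $c = \theta(p)$ and using $|\theta(z) - \theta(p)| \leq \omega(|z-p|)$, polar coordinates produce a far-field bound of $C\xi \int_{2\xi}^\infty m(\eta^{-1}) \omega(\eta)/\eta^2\, d\eta$, which is exactly the second term of $\Omega$.

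In the \emph{near field} $\{|z-p| < 2\xi\}$ one decomposes further into $B(x,\xi/2)$, $B(y,\xi/2)$, and an annular remainder on which both $|x-z|$ and $|y-z|$ are comparable to $\xi$. On each sub-region pick $c$ equal to $\theta$ at its natural center (so that $|\theta(z) - c|$ vanishes exactly where the corresponding piece of the kernel is singular), use $|K(x-z) - K(y-z)| \leq |K(x-z)| + |K(y-z)|$, and integrate in polar coordinates. The ``diagonal'' contributions $\int_{B(x,\xi/2)}|K(x-z)|\,\omega(|x-z|)\,dz$ and its symmetric counterpart give $C\int_0^{\xi/2} m(\eta^{-1})\omega(\eta)/\eta\,d\eta$. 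The ``cross'' contributions (where the non-singular half of the kernel is paired with the local modulus) and the annular contribution are each bounded by $C\, m(\xi^{-1}) \omega(\xi)$, which is absorbed via the elementary inequality
\[
m(\xi^{-1})\,\omega(\xi) \,\leq\, C \int_{\xi/2}^\xi \frac{m(\eta^{-1})\omega(\eta)}{\eta}\,d\eta,
\]
itself a consequence of the monotonicity of $m$ and the concavity bound $\omega(\xi) \leq 2\omega(\xi/2)$. Summing the pieces yields the first term of $\Omega$, and combining with the far-field estimate produces $|u(x) - u(y)| \leq \Omega(\xi)$ with $A$ depending only on $m$.

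The hardest part is the near-field analysis: a naive splitting produces a divergent term of the form $\omega(\xi) \int_0^\xi m(\eta^{-1})/\eta\, d\eta$, and this can only be avoided by matching the kernel singularity on each sub-region with the vanishing point of $\theta - c$. The kernel bounds in the first step are standard but must be tracked carefully so that the constant $A$ depends only on the multiplier $m$ (through the H\"ormander--Mikhlin constants in \eqref{eq:m:cond:3}), not on $\theta$ or $\xi$.
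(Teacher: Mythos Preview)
Your proof is correct and follows the same strategy as the paper's appendix: derive the pointwise kernel bounds $|K(w)| \leq C\, m(|w|^{-1})|w|^{-2}$ and $|\nabla K(w)| \leq C\, m(|w|^{-1})|w|^{-3}$ (the paper obtains these in Lemma~\ref{lemma:Kernel} via a single smooth cutoff at scale $R=|w|^{-1}$ rather than a full Littlewood--Paley sum), then perform a near/far split with the subtracted constant matched to the kernel singularity. The paper's near-field geometry is slightly cleaner than yours: it takes the near region for the $K(x-\cdot)$ integral to be $\{|x-z|\leq 2\xi\}$ with constant $\theta(x)$, and for $K(y-\cdot)$ the ball $\{|y-z|\leq 2\xi\}$ with constant $\theta(y)$, which eliminates your further sub-splitting into $B(x,\xi/2)$, $B(y,\xi/2)$ and an annulus and hence the need to absorb the cross-term $m(\xi^{-1})\omega(\xi)$.
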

The proof of Lemma~\ref{lemma:u:MOC} shall be given in the
Appendix. For the dissipative terms, we have:
\begin{lemma}[\bf Dissipation control]\label{lemma:d:MOC}
Assume we are in a breakthrough scenario as in Lemma~\ref{break}. Then
\begin{align}
\LL\theta(y,t_1) - \LL\theta(x,t_1) \leq \DD(\xi) & \equiv  \frac{1}{\pi}  \int_{0}^{\xi/2} \frac{\omega(\xi+2\eta)
 + \omega(\xi - 2\eta) - 2 \omega(\xi)}{\eta^2} d\eta\notag\\
 &\qquad + \frac{1}{\pi} \int_{\xi/2}^{\infty} \frac{\omega(2\eta+\xi) -
 \omega(2\eta - \xi) - 2\omega(\xi)}{\eta^2} d\eta. \label{eq:dissipation:moc}
\end{align}
\end{lemma}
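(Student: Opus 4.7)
The plan is to start from the singular-integral representation
\begin{align*}
\LL f(z) = \frac{1}{2\pi}\,\mathrm{PV}\int_{\RR^2} \frac{f(z) - f(z+h)}{|h|^3}\,dh,
\end{align*}
valid on $\RR^2$ and extended to $\TT^2$ by periodizing the kernel (the periodic tails contribute only smooth, lower-order corrections that are harmless here). Subtracting this representation at $z = y$ and $z = x$, using the breakthrough identity $\theta(x,t_1) - \theta(y,t_1) = \omega(\xi)$, and symmetrizing via $h \mapsto -h$ gives
\begin{align*}
\LL\theta(y,t_1) - \LL\theta(x,t_1) = \frac{1}{4\pi}\int_{\RR^2} \frac{N(h)}{|h|^3}\,dh,
\end{align*}
where
\begin{align*}
N(h) := [\theta(x+h) - \theta(y+h)] + [\theta(x-h) - \theta(y-h)] - 2\omega(\xi).
\end{align*}
Symmetrization is essential because it renders $N(h) = O(|h|^2)$ near the origin, so the integral is absolutely convergent and the principal value is dispensed with. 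By Lemma~\ref{break} together with continuity in time, $\theta(\cdot,t_1)$ obeys $\omega$ in the non-strict sense, so the modulus bound $|\theta(a) - \theta(b)| \leq \omega(|a-b|)$ is available at every pair of points.

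The second step is a pointwise upper bound on $N(h)$ via the arithmetic identity
\begin{align*}
[\theta(x+h) - \theta(y+h)] + [\theta(x-h) - \theta(y-h)] = [\theta(x+h) - \theta(y-h)] + [\theta(x-h) - \theta(y+h)]
\end{align*}
combined with the modulus. Orient coordinates so that $x - y = \xi e_1$ and write $h = (\eta, \tau)$; the swapped pairing introduces the distances $|\xi e_1 \pm 2h|$. In the region $|\eta| < \xi/2$ the expected bound $\omega(|\xi e_1 + 2h|) + \omega(|\xi e_1 - 2h|) - 2\omega(\xi)$ applies directly, which by concavity of $\omega$ is $\leq 0$. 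In the outer region $|\eta| > \xi/2$ one must combine the swapped pairing with either the trivial bound $[\theta(x\pm h) - \theta(y\pm h)] \leq \omega(\xi)$ or, more sharply, a telescoping identity through $\theta(x)$ and $\theta(y)$ such as
\begin{align*}
\theta(y+h) - \theta(x-h) = [\theta(y+h) - \theta(y)] - \omega(\xi) + [\theta(x) - \theta(x-h)],
\end{align*}
which uses the breakthrough equality to extract the asymmetric numerator appearing in the second summand of $\DD(\xi)$.

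The final step is the two-to-one dimensional reduction. Integrating the transverse variable $\tau$ against $(\eta^2 + \tau^2)^{-3/2}$ via the elementary identity $\int_{\RR}(\eta^2+\tau^2)^{-3/2}\,d\tau = 2/\eta^2$, and combining with the symmetrization prefactor $1/(4\pi)$, produces exactly the normalization $1/\pi$ appearing in $\DD(\xi)$. Folding $\eta < 0$ onto $\eta > 0$ by the $\eta \mapsto -\eta$ symmetry and splitting the remaining half-line integral at $\eta = \xi/2$ to resolve the sign of $\xi - 2\eta$ then assembles the two summands of $\DD(\xi)$. The main technical obstacle is precisely this transverse reduction: the quantities $|\xi e_1 \pm 2h|$ depend on both $\eta$ and $\tau$, so replacing them by $|\xi \pm 2\eta|$ inside $\omega$ while preserving the upper bound on the integrand requires a careful monotone-rearrangement argument that leverages both the concavity and monotonicity of $\omega$, together with the sharp pointwise bounds just described. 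This is the technical content of the 2D SQG calculation recorded in \cite{KNV}, to which the paper appeals for the full details.
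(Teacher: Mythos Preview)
The paper does not prove this lemma at all: it states explicitly that the proofs of Lemmas~\ref{break} and~\ref{lemma:d:MOC} ``can be found in \cite{KNV}'' and gives no further argument. Your proposal is therefore more detailed than what the paper supplies, and since you also ultimately defer to \cite{KNV} for the hard step, the two are aligned.

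That said, one point in your sketch deserves a caution. You correctly identify the transverse reduction as the crux, but your description of it as ``replacing $|\xi e_1 \pm 2h|$ by $|\xi \pm 2\eta|$ inside $\omega$ while preserving the upper bound'' via a ``monotone-rearrangement argument'' is misleading. Since $\omega$ is increasing, setting $\tau=0$ \emph{decreases} the arguments and hence the $\omega$-values, which moves the integrand the wrong way for an upper bound. The actual KNV mechanism is not a rearrangement of this type: it exploits instead that $N(h)\le 0$ pointwise (so contributions away from the axis can only help), combined with a comparison to the explicit one-dimensional extremal profile obeying $\omega$, which is what produces the 1D kernel $\eta^{-2}$ and the numerators you want. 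Your telescoping identity for the outer region is on the right track for obtaining the asymmetric numerator $\omega(2\eta+\xi)-\omega(2\eta-\xi)-2\omega(\xi)$, but as written it does not yet close. Since both you and the paper point to \cite{KNV} for the details, this is not a gap relative to the paper, but you should be aware that the reduction is more delicate than your phrasing suggests.
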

Given the three Lemmas above and \eqref{eq:MOC}, in order to verify the
preservation of $\omega$ for all time, it is sufficient to check that
$\Omega(\xi)\omega'(\xi)+\DD(\xi) <0$ for every $\xi>0.$

The conditions imposed on the symbol $m$ have two consequences which shall be useful later:
\begin{lemma}[\bf Further properties of $\boldsymbol{m}$] \label{lemma:m}
Let $m$ be smooth radial radially non-decreasing function which satisfies \eqref{eq:m:cond:2}.
Then it holds that
\begin{align}
\lim_{|\zz| \rightarrow \infty} \frac{m(\zz) (1 + \ln |\zz|)}{|\zz|} = 0 \label{eq:m:supercritical}
\end{align}
and we have
\begin{align}
\int_{0}^{|\zz|} m(r^{-1}) dr \leq 2 |\zz| m(|\zz|^{-1}) \label{eq:m:IntBound}
\end{align}
for all $|\zz|$ which are sufficiently small, depending on $m$.
\end{lemma}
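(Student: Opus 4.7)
The plan is to exploit the logarithmic-derivative hypothesis \eqref{eq:m:cond:2}, which says $|\zeta| m'(\zeta)/m(\zeta) \to 0$, to get a pointwise upper bound on the growth rate of $m$ for \eqref{eq:m:supercritical}, and then to use an integration-by-parts absorption argument for \eqref{eq:m:IntBound}. Both estimates are asymptotic (holding in the regimes $|\zeta|\to\infty$ and $|\zeta|\to 0^+$ respectively), so one is allowed to pass to the region where the small-quantity conclusions of \eqref{eq:m:cond:2} are available.

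For \eqref{eq:m:supercritical}, fix any $\varepsilon \in (0,1)$. By \eqref{eq:m:cond:2} there exists $R_\varepsilon \geq 1$ such that $m'(r)/m(r) \leq \varepsilon/r$ for all $r \geq R_\varepsilon$ (working with the radial representative $m(r) = m(|\zeta|)$). Integrating the differential inequality $(\ln m)'(r) \leq \varepsilon/r$ from $R_\varepsilon$ to $r$ gives $m(r) \leq m(R_\varepsilon) (r/R_\varepsilon)^{\varepsilon}$. Therefore
\[
\frac{m(r)(1+\ln r)}{r} \leq \frac{m(R_\varepsilon)}{R_\varepsilon^{\varepsilon}} \cdot \frac{1+\ln r}{r^{1-\varepsilon}},
\]
which tends to zero as $r\to\infty$ since $\varepsilon<1$. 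This gives \eqref{eq:m:supercritical}.

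For \eqref{eq:m:IntBound}, change variables $s = 1/r$ to rewrite
\[
\int_{0}^{|\zeta|} m(r^{-1})\, dr = \int_{|\zeta|^{-1}}^{\infty} \frac{m(s)}{s^2}\, ds.
\]
Set $R = |\zeta|^{-1}$, which is large when $|\zeta|$ is small. Integrate by parts with $u = m(s)$ and $dv = ds/s^2$; since \eqref{eq:m:supercritical} guarantees $m(s)/s \to 0$ at infinity, the boundary term at $\infty$ vanishes and we obtain
\[
\int_{R}^{\infty} \frac{m(s)}{s^2}\, ds = \frac{m(R)}{R} + \int_{R}^{\infty} \frac{m'(s)}{s}\, ds.
\]
By \eqref{eq:m:cond:2}, after enlarging $R$ if necessary, $sm'(s)/m(s) \leq 1/2$ for all $s \geq R$, so $m'(s)/s \leq m(s)/(2s^2)$, and the second integral is bounded by half of the original integral. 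Absorbing that half into the left-hand side yields
\[
\int_{R}^{\infty} \frac{m(s)}{s^2}\, ds \leq \frac{2\, m(R)}{R} = 2|\zeta|\, m(|\zeta|^{-1}),
\]
which is \eqref{eq:m:IntBound}. The only subtle point in the whole argument is justifying the vanishing of the boundary term at infinity in the integration by parts, but this is handled cleanly by \eqref{eq:m:supercritical} from part (i); no other obstacles are expected.
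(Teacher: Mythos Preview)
Your proof is correct. For \eqref{eq:m:supercritical} your argument is essentially the paper's: integrate the differential inequality $(\ln m)'(r) \leq \varepsilon/r$ to bound $m(r) \leq C r^{\varepsilon}$ and conclude. The paper simply fixes $\varepsilon = 1/2$ rather than allowing any $\varepsilon<1$, but the mechanism is identical.

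For \eqref{eq:m:IntBound} your route differs from the paper's. The paper argues pointwise: from $2r m'(r)\leq m(r)$ for $r\geq r_0$ one checks that $r^{1/2} m(r^{-1})$ is non-decreasing on $(0,r_0^{-1}]$, so $m(r^{-1}) \leq |\zz|^{1/2} m(|\zz|^{-1}) r^{-1/2}$ for $0<r\leq |\zz|$, and then integrates $r^{-1/2}$ explicitly to get the factor of $2$. You instead substitute $s=r^{-1}$, integrate by parts, and absorb the $\int m'(s)/s\,ds$ term back into the left-hand side using the same $1/2$ bound. Both approaches hinge on exactly the same quantitative input ($s m'(s)/m(s)\leq 1/2$ for large $s$), and both give the constant $2$; the paper's pointwise monotonicity is perhaps slightly more transparent, while your absorption argument is a standard and robust pattern that avoids guessing the auxiliary function $r^{1/2} m(r^{-1})$. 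One small point worth making explicit in your write-up is that the integral $\int_R^\infty m(s)/s^2\,ds$ is finite (which follows from the power bound $m(s)\leq C s^{\varepsilon}$ in part~(i)), so the absorption step is legitimate.
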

\begin{proof}[Proof of Lemma~\ref{lemma:m}]
From \eqref{eq:m:cond:2} it follows that there exists $r_{0} > 0 $ such that for all $|\zz| \geq r_{0}$ we have $2 |\zz| m'(\zz) \leq m(\zz)$.  In order to prove \eqref{eq:m:supercritical},
let $f(r) = r m(r)$. We have $f'(r) \leq 3 m(r)/2 = (3/2) r^{-1} f(r) $, for all $r\geq r_{0}$, and hence $f(r) \leq f(r_{0}) r_{0}^{-3/2} r^{3/2} = m(r_{0}) r_{0}^{-1/2} r^{3/2}$. Therefore $r^{-1} (1 + \ln r) m(r) = r^{-2} (1+\ln r) f(r) \leq (1+\ln r) r^{-1/2} m(r_{0}) r_{0}^{-1/2}\rightarrow 0$ as $r \rightarrow \infty$. 
In fact, it is easy to see that $m(\zz)/|\zz|^a \rightarrow 0$ as $\zz \rightarrow \infty$ for every $a>0,$ but we will not need this stronger bound in the proof. 

To prove \eqref{eq:m:IntBound}, we note that the function $r^{1/2} m(r^{-1})$ is non-decreasing on $r \leq r_{0}^{-1}$. Therefore $m(r^{-1}) \leq |\zz|^{1/2} m(|\zz|^{-1}) r^{-1/2}$, and \eqref{eq:m:IntBound} follows if $|\zz| \leq r_{0}^{-1}$, by integrating in $r$.
\end{proof}

\section{Proof of Main Theorem} \label{sec:moduli}
The main difference between our argument here and \cite{KNV} is
that since \eqref{EQ:1}--\eqref{EQ:2} is beyond the critical
scaling, one cannot use $\omb(\xi) = \omega(B \xi)$ to
construct the needed family of moduli of continuity, from a fixed
modulus $\omega$.

\subsection{A suitable family of moduli of continuity}
We fix a sufficiently small positive constant $\kappa$, to be chosen later.
For $B\geq 1$, we define $\delta = \delta(B)$ to be the unique solution of
\begin{align}
B \delta m(\delta^{-1}) = \kappa. \label{eq:delta}
\end{align}
To see that $\delta$ exists and is unique, let $g(\delta) = \delta m(\delta^{-1})$. Due to \eqref{eq:m:supercritical}, we have that $g(\delta) \rightarrow 0$ as $\delta\rightarrow 0+$, and due to \eqref{eq:m:cond:2}, we have that $g'(\delta) =  m(\delta^{-1}) - \delta^{-1} m'(\delta^{-1}) \geq m(\delta^{-1}) /2 > 0$, for all $\delta \leq r_{0}^{-1}$. So $g$ is increasing (and continuous) at least until $r_{0}^{-1}$, and hence if $\kappa$ is chosen such that $\kappa \leq g(r_{0}^{-1}) = r_{0}^{-1}m(r_{0})$, since $B \geq 1$, the equation $g(\delta) = \kappa B^{-1}$ will have a unique solution. Note that $\delta(B) \rightarrow 0$ as $B \rightarrow \infty$ since $g(0+)=0$, and $\delta(B)$ is a strictly decreasing function of $B$.

Having defined $\delta(B)$ for each $B\geq 1$, we shall consider the modulus of continuity $\omb$ defined as the continuous function with $\omb(0) = 0$ and
\begin{align}
  \omb'(\xi) &= B - \frac{B^{2}}{8\kappa} \xi m(\xi^{-1}) \left(4 + \ln \frac{\delta(B)}{\xi} \right), \ \mbox{for all } 0 < \xi \leq \delta(B)\label{smallom}\\
  \omb'(\xi) &= \frac{\gamma}{\xi (4 + \ln(\xi/ \delta(B))) m(\delta(B)^{-1})}, \ \mbox{for all } \xi > \delta(B)\label{farom}
\end{align}
where $\kappa = \kappa(A,m)$ and $\gamma = \gamma(\kappa,A,m)$ are sufficiently small constants to be chosen
later. To verify that $\omb$ is a modulus of continuity, we need to check monotonicity, concavity,  that $ 0 < \omb'(0+) < \infty$, and that $\omb''(0+) = - \infty$.

From \eqref{eq:m:supercritical} we know that $\xi m(\xi^{-1}) (1 + \ln |\xi|^{-1}) \rightarrow 0$ as $\xi \rightarrow 0+$, and therefore, for every $B \geq 1$ we have that $\omb'(0+) = B$. Note that in fact we have $\omb'(\xi) < B$, and hence $\omb(\xi) \leq B\xi$ for all $0< \xi \leq \delta(B)$. Taking the derivative of \eqref{smallom} we obtain
\begin{align}
\omb''(\xi) & = -\frac{B^{2}}{8\kappa} \left( \left( m(\xi^{-1}) - \xi^{-1} m'(\xi^{-1}) \right) \left(4 + \ln \frac{\delta(B)}{\xi} \right) - m(\xi^{-1}) \right) \notag\\
&\leq -\frac{B^{2}}{8\kappa} \left( \frac 12 m(\xi^{-1}) \left(4 + \ln \frac{\delta(B)}{\xi} \right) - m(\xi^{-1}) \right)\notag\\
& \leq -\frac{B^{2}}{32\kappa} m(\xi^{-1}) \left( 4 + \ln \frac{\delta(B)}{\xi} \right) \label{eq:m:second:derivative}
\end{align}
which implies that $\omb''(\xi) \rightarrow - \infty$ as $\xi \rightarrow 0+$ since $m(\xi^{-1}) \geq 1$ for all $\xi >0$. Note that in the first inequality of \eqref{eq:m:second:derivative} we have used $2 \xi^{-1} m'(\xi^{-1}) \leq m(\xi^{-1})$, which holds for all $\xi \leq \delta(B)$ as long as $\delta(B)$ is sufficiently small (how small it needs to be depends only on $m$). 
One can always ensure that $\delta(B)$ is small enough since $\delta(B) \leq \delta(1)$ for all $B \geq 1$, and $\delta(1)$ can be made arbitrarily small by choosing $\kappa$ to be sufficiently small, depending only on $m$.

From \eqref{farom} and \eqref{eq:m:second:derivative} it is clear that the concavity of $\omb$ may only fail at $\xi = \delta(B)$. However, from \eqref{eq:delta} and \eqref{smallom} we obtain
    \begin{align}
    \omb'(\delta(B)-) = B - \frac{B^{2}}{2 \kappa} \delta(B) m(\delta(B)^{-1}) &= \frac{B}{2}\label{eq:concave-}.
    \end{align}
    On the other hand by \eqref{farom} we have
    \begin{align}
      \omb'(\delta(B)+) = \frac{\gamma}{4 \delta(B) m(\delta(B)^{-1})} = \frac{\gamma B}{4 \kappa} < \frac{B}{4}\label{eq:concave+}
    \end{align}
    for all $\gamma < \kappa$. Together, \eqref{eq:concave-} and \eqref{eq:concave+} show that $\omb$ is concave.

To prove that $\omb$ is monotonically increasing, it  is sufficient to verify that $\omb'(\xi) > 0$ for all $0< \xi  < \delta(B)$. But $\omb'(0)=B \geq 1$ and $\omb'(\xi)$ is decreasing on $(0,\delta(B))$ due to \eqref{eq:m:second:derivative}, so that we only need to verify that $\omb'(\delta(B)-) > 0$. This follows directly from \eqref{eq:concave-}.


Let us denote $\Omega_B(\xi)$ and $\DD_B(\xi)$ respectively the
modulus of the velocity $u$ given by \eqref{eq:u:moc} and
dissipation estimate \eqref{eq:dissipation:moc} corresponding to
$\omb(\xi).$ It is sufficient to prove two things: that each initial data $\theta_{0}$ obeys some modulus of continuity $\omb$ for a suitable $B \geq 1$, and that the expression in \eqref{eq:MOC} when
computed for each $\omb$ is strictly negative for all $\xi>0.$ 

\subsection{Modulus of continuity for the initial data}
First we show that any initial data $\theta_{0} \in C^{\infty}(\TT^{2})$ obeys a modulus of continuity $\omb$ for some sufficiently large $B.$ 
As noted earlier, this is achieved if we find a sufficiently large $B$ such that $\omb(\xi) > \min\{\xi \Vert \nabla \theta_{0} \Vert_{L^\infty}, 2 \Vert \theta_{0} \Vert_{L^\infty} \}$ for all $\xi > 0$. Observe that due to concavity of $\omb$ it is sufficient to find $B$ such that
\[ \omega_B\left(\frac{2\|\theta_0\|_{L^\infty}}{\|\nabla \theta_0\|_{L^\infty}}\right) \geq 2\|\theta_0\|_{L^\infty}.\]
However, note that for every fixed $a>0,$ we have $a > \delta(B)$ if $B$ is sufficiently large, and
\begin{align*}
\int_{\delta(B)}^a \frac{\gamma}{\xi (4+\ln(\xi/\delta(B))) m(\delta(B)^{-1})} d\xi = \frac{\gamma}{m(\delta(B)^{-1})} \ln(1+\ln(a /\delta(B)) )\rightarrow \infty
\end{align*}
as $B \rightarrow \infty$ due to our assumption
\eqref{eq:m:cond:1} on growth of $m$, and since $\delta(B) \rightarrow 0$ as $B \rightarrow \infty$.
This shows that any smooth $\theta_{0}$  obeys a modulus of continuity $\omb$ if $B$ is chosen large enough.

\subsection{Conservation of the modulus of continuity}
We shall now prove that if $\kappa$ is chosen sufficiently small
(depending only on $m,$ and $A$), and $\gamma$ is chosen sufficiently
small (depending only on $\kappa,m,$ and $A$), then the expression
\eqref{eq:MOC} is strictly negative, i.e. $\Omega_B(\xi)
\omb'(\xi) + \DD_B(\xi) < 0$, for all $\xi>0$. Note that neither
$\kappa$, nor $\gamma$ will depend on $B \geq 1$.

\subsection*{The case $0<\xi \leq \delta(B)$} We first observe that $\omb'(\xi) \leq B$ for all $\xi \in (0,\delta(B)]$. Using concavity of $\omega$ and the mean value theorem we may estimate
    \begin{align*}
    \DD_B(\xi) \leq \frac{1}{\pi} \xi \omb''(\xi).
    \end{align*}
    In addition, it follows from the bound \eqref{eq:m:second:derivative} on $\omb''(\xi)$ and the above estimate that
    \begin{align}
	\DD_B(\xi) \leq  - \frac{1}{32\pi \kappa} B^{2} \xi m(\xi^{-1}) \left( 4 + \ln \frac{\delta(B)}{\xi} \right). \label{eq:I:dissipation}
    \end{align}
    The main issue is to estimate the contribution from $\Omega_B(\xi)$. From \eqref{eq:u:moc} and \eqref{smallom} we have that
    \begin{align}
      \Omega_B(\xi)\omb'(\xi) &\leq A B \left( B\int_0^\xi m(\eta^{-1}) d\eta + B\xi \int_{\xi}^{\delta(B)} \frac{m(\eta^{-1})}{\eta} d\eta + \xi \int_{\delta(B)}^\infty \frac{\omb(\eta)  m(\eta^{-1}) }{\eta^2} d\eta \right)\notag\\
      &\leq A B \left( 2B \xi m(\xi^{-1}) + B\xi m( \xi^{-1}) \ln  \frac{\delta(B)}{\xi} + \xi  m(\xi^{-1}) \int_{\delta(B)}^\infty \frac{\omb(\eta)}{\eta^2} d\eta \right) \label{eq:I:drift:1}.
    \end{align}
    In the second inequality of \eqref{eq:I:drift:1} we have used the monotonicity of $m$ and the inequality \eqref{eq:m:IntBound},
    which holds for all $\xi \leq \delta(B)$, whenever $\delta(B)$ is sufficiently small, depending only on $m$. But note that letting $\kappa$ be sufficiently small, depending on $m$ and  not on $B$, we ensure that $\delta(1)$ is sufficiently small, and the bound $\delta(B) \leq \delta(1)$ for all $B \geq 1$, justifies the applicability of \eqref{eq:m:IntBound} for all $B \geq 1$.

    In order to estimate $\int_{\delta(B)}^\infty\omb(\eta)/\eta^2 d\eta$, we integrate by parts and use the slow growth of $\omb$ (cf.~\eqref{eq:m:cond:1}) to obtain
    \begin{align}
      \int_{\delta(B)}^\infty \frac{\omb(\eta)}{\eta^2} d\eta &\leq \frac{\omb(\delta(B))}{\delta(B)} + \int_{\delta(B)}^{\infty} \frac{\gamma}{\eta^2 (4 + \ln(\eta / \delta(B)) ) m(\delta(B)^{-1})} d\eta \notag\\
      &\leq B + \frac{\gamma}{4 \delta(B) m(\delta(B)^{-1})} = B + \frac{\gamma B }{4 \kappa} \leq 2 B \label{eq:I:drift:2}
    \end{align}
    if $\gamma < \kappa$, since $m(\delta(B)^{-1})\geq 1$. Combining \eqref{eq:I:dissipation} with \eqref{eq:I:drift:1} and \eqref{eq:I:drift:2} we obtain
    \begin{align}
      \Omega_B(\xi)\omb'(\xi) + \DD_B(\xi) &\leq \left(A - \frac{1}{32 \pi \kappa} \right) B^2 \xi m(\xi^{-1}) \left( 4 + \ln \frac{\delta(B)}{\xi}\right) < 0  \label{eq:I:drift:3}
    \end{align}
    for all $\xi \in (0,\delta(B)]$ if we choose $\kappa$ so that $32 \pi \kappa A < 1$. To avoid a circular argument, note that $\kappa$ was chosen independently of $\gamma$ and $B$,  it only depends on $m$ and $A$.

\subsection*{The case $\xi > \delta(B)$} We observe that for each $B\geq 1$ the modulus of continuity $\omb$ satisfies
\begin{align}
\omb(2\xi) \leq \frac 32 \omb(\xi), \ \mbox{for all }\xi\geq
\delta(B).\label{eq:conv}\end{align} Indeed due to the definition \eqref{farom}
of $\omega_B$, we have
\begin{align*}
\omb(2\xi) \leq \omb(\xi) + \frac{\gamma}{4 m(\delta(B)^{-1})}
\end{align*}
for every $\xi \geq \delta(B).$ But from the monotonicity of $\omb$ and the mean value theorem we have $\omb(\xi) \geq
\omb(\delta(B)) \geq \delta(B) \omb'(\delta(B)-)$, since $\omb'$ is strictly decreasing on $(0, \delta(B))$. By \eqref{eq:delta} and \eqref{eq:concave-} it follows that taking
$
\gamma < \kappa
$
is sufficient for \eqref{eq:conv} to hold.
Using \eqref{eq:conv}, we estimate
\begin{align}
\DD_{B}(\xi) \leq \frac{1}{\pi} \int_{\xi/2}^{\infty} \frac{\omega_B(2\eta+\xi) - \omega_B(2\eta - \xi) - \omega_B(2\xi)-\frac12 \omega_B(\xi)}{\eta^2}
d\eta
\leq -\frac{1}{2\pi} \frac{\omb(\xi)}{\xi} \label{eq:II:1}
\end{align}
which holds for all $\xi > \delta(B)$. Next, let us bound the term
arising from $\Omega_{B}(\xi) \omb'(\xi) $ in \eqref{eq:MOC},
namely
\begin{align}
A \omb'(\xi) \left( \int_{0}^{\xi} \frac{\omb(\eta)m(\eta^{-1})}{\eta} d\eta + \xi \int_{\xi}^{\infty} \frac{\omb(\eta)m(\eta^{-1})}{\eta^2} d\eta\right) .\label{eq:II:2}
\end{align}
We first estimate
\begin{align}
\int_{0}^{\xi} \frac{\omb(\eta)m( \eta^{-1})}{\eta} d\eta &\leq B \int_{0}^{\delta(B)} m(\eta^{-1}) d\eta + \int_{\delta(B)}^{\xi}\frac{\omb(\eta)m( \eta^{-1})}{\eta} d\eta \notag\\
&\leq 2 B\delta(B) m(\delta(B)^{-1}) + \omb(\xi) m( \delta(B)^{-1}) \ln \frac{\xi}{\delta(B)}\notag\\
& = 2 \kappa + \omb(\xi) m( \delta(B)^{-1}) \ln \frac{\xi}{\delta(B)}\label{eq:II:3}
\end{align}
for all $\xi > \delta(B)$. Above we used \eqref{eq:delta} and \eqref{eq:m:IntBound}, which may be applied since $\delta(B)$ is sufficiently small with respect to $m$ for any $B \geq 1$.
Furthermore, upon integration by parts we have
\begin{align}
\xi \int_{\xi}^{\infty} \frac{\omb(\eta)m( \eta^{-1})}{\eta^2} d\eta &\leq \xi m( \xi^{-1}) \int_{\xi}^{\infty} \frac{\omb(\eta)}{\eta^2} d\eta \notag\\
& = \xi m(\delta(B)^{-1}) \left( \frac{\omb(\xi)}{\xi} + \frac{\gamma}{m(\delta(B)^{-1})} \int_{\xi}^{\infty} \frac{1}{\eta^{2} (4 + \ln (\eta/\delta(B)))} d\eta \right) \notag\\
&\leq \omb(\xi) m(\delta(B)^{-1}) + \gamma .  \label{eq:II:5}
\end{align}
Therefore, inserting the bounds \eqref{eq:II:3} and \eqref{eq:II:5}  into \eqref{eq:II:2}, and letting $\gamma \leq \kappa$, we obtain
\begin{align}
\Omega_{B}(\xi) \omb'(\xi) &\leq A \omb'(\xi) \left( \gamma + 2\kappa + \omb(\xi) m(\delta(B)^{-1}) \left( 1 + \ln \frac{\xi}{\delta(B)} \right) \right) \notag\\
&\leq \frac{A  \gamma}{\xi (4 + \ln \xi/\delta(B) ) m(\delta(B)^{-1})}  \left( 3\kappa + \omb(\xi) m(\delta(B)^{-1}) \left( 1 + \ln \frac{\xi}{\delta(B)} \right) \right) \notag\\
&\leq  \frac{2 A \gamma \omb(\xi) }{ \xi} \label{eq:II:6}
\end{align}
since $\kappa \leq 2 \omb(\delta(B)) m(\delta(B)^{-1}) \leq 2 \omb(\xi) m(\delta(B)^{-1})$. Indeed, the latter holds since as above we have
\begin{align*}
\omb(\delta(B)) \geq \delta(B) m'(\delta(B)-) = \frac{B \delta(B)}{2} = \frac{\kappa}{2 m(\delta(B)^{-1})}.
\end{align*}
Lastly, from \eqref{eq:II:1} and \eqref{eq:II:6} it follows that
\begin{align}
\Omega_{B}(\xi) \omb'(\xi) + \DD_{B}(\xi) < \left(2 A \gamma - \frac{1}{2\pi}\right) \frac{\omb(\xi)}{\xi} < 0
\end{align}
as long as $\gamma$ is chosen small enough so that $4 \pi A \gamma <1 $.

\section{Appendix} 
\label{sec:app} Here we give details regarding the proof of
Lemma~\ref{lemma:u:MOC}. Let $m(\zz)$ be a continuous, radial,
non-decreasing function on $\RR^{d}$, smooth on $\RR^{d}$, with
$m(\zz) = m(|\zz|)\geq 1$ for all $\zz \in \RR^{d}$. Assume that
$m(\zz)$ satisfies the H\"ormander-Mikhlin-type condition
(cf.~\cite{Stein})
\begin{align}
|\zz|^{k} | \partial_{\zz}^{k} m(\zz) | \leq C m(\zz) \label{ap:cond:1}
\end{align}
for some $C \geq 1$, all $\zz \neq 0$, and all $k \in \{0,\ldots,d+2\}$. In addition we require that
\begin{align}
\lim_{|\zz|\rightarrow \infty} \frac{|\zz| m'(\zz)}{m(\zz)}  = 0. \label{ap:cond:2}
\end{align}
The following lemma gives estimates on the distribution $K$ whose
Fourier transform is $i \zz_{j} |\zz|^{-1} m(\zz)$, for any $j \in
\{1,\ldots,d\}$.

\begin{lemma}[\bf Kernel estimate] \label{lemma:Kernel}
Let $K(x)$ be the kernel of the operator $\partial_{j}
\LL^{-1}m(\LL)$, where $m$ is smooth on $\RR^d$, radial,
non-decreasing in radial variable, and satisfies the conditions
\eqref{ap:cond:1}--\eqref{ap:cond:2}. Then we have
\begin{align}
|K(x)| \leq C |x|^{-d} m(|x|^{-1}) \label{ap:K:1}
\end{align}
and
\begin{align}
|\nabla K(x)| \leq C |x|^{-d-1} m(|x|^{-1}) \label{ap:K:2}
\end{align}
for all $x \ne 0\in \RR^d.$ 
\end{lemma}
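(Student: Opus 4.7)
The plan is to use the classical cutoff/integration-by-parts method for estimating Fourier multiplier kernels, adapted so that the slowly-varying weight $m(|x|^{-1})$ appears as the right factor in the final bound. Write $\sigma(\zz) = i\zz_j |\zz|^{-1} m(\zz)$ for the symbol of $\partial_j \LL^{-1} m(\LL)$, fix a smooth radial cutoff $\phi$ equal to $1$ on $\{|\zz|\leq 1\}$ and supported in $\{|\zz|\leq 2\}$, and for each $x\neq 0$ set $R = |x|^{-1}$ and $\phi_R(\zz) = \phi(\zz/R)$. Decompose $K = K_{\mathrm{lo}} + K_{\mathrm{hi}}$, where the two pieces are the inverse Fourier transforms of $\phi_R\sigma$ and $(1-\phi_R)\sigma$ respectively, and estimate them separately.

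For $K_{\mathrm{lo}}$ I would use the trivial size bound $|\sigma(\zz)|\leq m(\zz)$ together with monotonicity of $m$ and the doubling estimate $m(2R)\leq Cm(R)$ (which follows from \eqref{ap:cond:1} with $k=1$) to write
\begin{equation*}
|K_{\mathrm{lo}}(x)|\leq \int_{|\zz|\leq 2R} m(\zz)\,d\zz \leq CR^d m(R) = C|x|^{-d} m(|x|^{-1}),
\end{equation*}
which is already the desired bound \eqref{ap:K:1}. For $K_{\mathrm{hi}}$, the H\"ormander-Mikhlin condition \eqref{ap:cond:1} and the classical Mikhlin estimates for the Riesz symbol $i\zz_j/|\zz|$, combined with the product rule, give $|\partial_\zz^\alpha\sigma(\zz)|\leq C|\zz|^{-|\alpha|} m(\zz)$ for all $|\alpha|\leq d+2$, while derivatives falling on $(1-\phi_R)$ cost at most $R^{-j}\leq C|\zz|^{-j}$ on its support. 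Choosing a coordinate with $|x_j|\gtrsim |x|$ and applying the identity $(ix_j)^{N} K_{\mathrm{hi}}(x) = (-1)^{N}\int e^{ix\cdot\zz}\partial_{\zz_j}^N[(1-\phi_R)\sigma](\zz)\,d\zz$ with $N = d+1$ yields
\begin{equation*}
|K_{\mathrm{hi}}(x)| \leq C|x|^{-(d+1)} \int_R^\infty r^{-2} m(r)\,dr.
\end{equation*}

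The residual task is to verify $\int_R^\infty r^{-2} m(r)\,dr \leq CR^{-1} m(R)$, which by the change of variables $s = r^{-1}$ is precisely the content of Lemma~\ref{lemma:m} (specifically \eqref{eq:m:IntBound}) for $R$ sufficiently large, and reduces to a trivial continuity statement when $R$ stays bounded. Substituting back gives \eqref{ap:K:1} with a constant $C$ depending only on $d$ and $m$. For \eqref{ap:K:2} the symbol of $\partial_k K$ is $i\zz_k\sigma(\zz)$, which picks up an extra factor of $|\zz|$: the low-frequency estimate then scales as $\int_{|\zz|\leq 2R} |\zz| m(\zz)\,d\zz \leq CR^{d+1} m(R)= C|x|^{-(d+1)} m(|x|^{-1})$, and the high-frequency estimate requires one further integration by parts ($N = d+2$), but the critical tail integral is again $\int_R^\infty r^{-2} m(r)\,dr \lesssim R^{-1} m(R)$, yielding $|x|^{-(d+2)} \cdot R^{-1} m(R)= |x|^{-(d+1)}m(|x|^{-1})$. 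The genuinely delicate point, and the only place where the argument is more than bookkeeping, is the control of this tail integral: the raw Mikhlin bound $rm'(r)\leq Cm(r)$ from \eqref{ap:cond:1} alone would not deliver $\int_R^\infty r^{-2} m(r)\,dr\lesssim R^{-1}m(R)$, and it is the refined growth assumption \eqref{ap:cond:2} (equivalently the fact that $rm'/m\to 0$) that absorbs the boundary term after one integration by parts and closes the estimate.
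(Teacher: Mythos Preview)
Your argument is correct and follows essentially the same scheme as the paper's: cutoff at scale $R$, trivial $L^1$ bound on the low-frequency piece, repeated integration by parts on the high-frequency piece, and control of the resulting tail integral $\int_R^\infty r^{-2}m(r)\,dr\lesssim R^{-1}m(R)$ via condition \eqref{ap:cond:2} (the paper phrases this as monotonicity of $r^{-1/2}m(r)$, you invoke the equivalent \eqref{eq:m:IntBound}). The paper integrates by parts $d+2$ times rather than $d+1$ and handles large $|x|$ separately by freezing $R=1$ and recognizing the Riesz kernel, but these are cosmetic variations on the same idea.
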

\begin{proof}[Proof of Lemma~\ref{lemma:Kernel}]
Consider a smooth non-increasing radial cutoff function $\eta(\zz)
= \eta(|\zz|)$ which is identically $1$ on $|\zz|\leq 1/2$, and
vanishes identically on $|\zz|\geq 1$. For $R>0$, let
$\eta_{R}(|\zz|) = \eta(|\zz|/R)$. Then, for $R>0$ to be chosen
later, we decompose
\begin{align*}
K(x) = C \int_{\RR^{d}} \eta_{R}(\zz) m(\zz) i \zz_{j}|\zz|^{-1} e^{i \zz \cdot x} d\zz + C \int_{\RR^{d}} (1-\eta_{R}(\zz))  m(\zz) i \zz_{j} |\zz|^{-1} e^{i \zz \cdot x} d\zz = K_{1}(x) + K_{2}(x).
\end{align*}
Since $m(\zz)$ is increasing, and $\eta_{R}$ is supported on $B_{R}$, we may bound $|K_{1}(x)| \leq C R^{d} m(R)$. On the other hand, upon integrating by parts $d+2$ times,  using \eqref{ap:cond:1} and the fact that $\partial_{\zz} (1-\eta_{R}(\zz))$ is supported on $R/2 \leq |\xi| \leq R$, we obtain
\begin{align}
|K_{2}(x)| &\leq C |x|^{-d-2} \int_{\RR^{d}} \left| \partial_{\zz}^{d+2} \left( (1-\eta_{R})(\zz) m(\zz)
i \zz_{j} |\zz|^{-1} \right) \right| d\zz \notag\\
&\leq C |x|^{-d-2} \left(  R^{-d-2} \int_{R/2\leq |\zz| \leq R}  m(\zz) d\zz + \int_{|\zz|\geq R/2}
|\zz|^{-d-2} m(\zz) d\zz \right).\label{ap:K2}
\end{align}
Observe that condition \eqref{ap:cond:2} shows there exists some
$r>0$ such that for all $|\zz| \geq r$ we have $2 |\zz|
m'(\zz) \leq m(\zz)$, and hence the function $|\zz|^{-1/2}
m(|\zz|)$ is non-increasing for $|\zz| \geq r$. Consider first
small $x,$ $|x| \leq 1/2r.$ Letting $R = |x|^{-1}$, we have that
$R/2 \geq r$. Using the facts that $m(|\zz|)$ is
non-decreasing, and $|\zz|^{-1/2} m(|\zz|)$ is non-increasing on
$|\zz| \geq r$, we obtain
\begin{align}\label{k2}
|K_{2}(x)| &\leq C |x|^{-d} m(|x|^{-1})
\end{align}
which upon recalling the earlier bound on $K_{1}$ concludes the
proof of \eqref{ap:K:1} for small $x$. For $|x|\geq 1/2r,$ we
can set $R=1$ and obtain that
\[ |K_2(x)| \leq C|x|^{-d-2}, \]
since due to \eqref{ap:cond:2} and the continuity of $m$ we have $|m(\zz)|\leq
C(m)|\zz|^{1/2}.$ On the other hand,
\[ K_1(x) = C \int_{\RR^d} (c_0 i \zz_j|\zz|^{-1}+
\varphi(\zz))e^{i\zz \cdot x} \, d\zz, \] where $c_0$ is a
constant and $\varphi(\zz) \in C_0^\infty.$ This gives the bound
\[ |K(x)| \leq C|x|^{-d}, \]
which together with \eqref{k2} implies \eqref{ap:K:1} for $|x|\geq
1/2r$. The bounds for $\nabla K(x)$ are obtained in the same fashion, the
only difference being an extra factor of $\zz$ in the estimates.
\end{proof}


Having estimated the kernel of the operator $\theta \mapsto u$, we are now ready to estimate the modulus of continuity of the velocity $u$, in terms of the modulus of continuity of the active scalar $\theta$.

\begin{proof}[Proof of Lemma~\ref{lemma:u:MOC}]
The proof is similar to that of \cite[Lemma]{KNV}. Fix $x\neq y$, and let $\xi = |x-y|$. Since $u = \nabla^{\perp} \left( \LL^{-1} m(\LL) \theta \right)$ we have that $\int_{|x|=1} K(x) d\sigma(x) = 0$, and hence we may bound
\begin{align*}
u(x) - u(y) & = \int_{|x-z|\leq 2\xi} K(x-z) (\theta(z) - \theta(x)) dz  - \int _{|y-z|\leq 2\xi} K(y-z) (\theta(z) - \theta(y)) dz \notag\\
& \qquad + \int_{|x-z| \geq 2\xi} K(x-z) (\theta(z) - \theta(\bar z)) dz  - \int _{|y-z|\geq 2\xi} K(y-z) (\theta(z) - \theta(\bar z)) dz
\end{align*}
where the integrals are taken in the principal value sense, and $\bar{z} = (x+y)/2$. Using the estimates on the kernel $K$ from Lemma~\ref{lemma:Kernel}, 
we obtain
\begin{align}
|u(x) - u(y)| &\leq C \int_{0}^{2\xi} \frac{m(\eta^{-1}) \omega(\eta)}{\eta}d\eta  + \int_{|\bar z -z| \geq 3\xi} | K(x-z) - K(y-z)|  |\theta(z) - \theta(\bar z)|  dz \notag\\
&\qquad  +  \int _{3\xi/2 \leq |\bar z -z|\leq 3\xi} \left( | K(x-z) | + | K(y-z)| \right)  | \theta(z) - \theta(\bar z) | dz \label{eq:u:moc:1}.
\end{align}
To estimate the second integral on the right hand side, note that
for $|z - \bar z|\geq 3\xi$, by the mean value theorem and
\eqref{ap:K:2}, we have
\begin{align*}
| K(x-z) - K(y-z)| \leq C \xi |z- \bar z|^{-3} m(|z-\bar z|^{-1}).
\end{align*}
Here we use that $m(s r) \leq s^{C} m(r)$ holds by
\eqref{ap:cond:1} for $s>1$. The third integral on the right hand
side of \eqref{eq:u:moc:1} is bounded using \eqref{ap:K:1} and we
obtain
\begin{align}
|u(x) - u(y)| &\leq C \int_{0}^{3\xi} \frac{m(\eta^{-1}) \omega(\eta)}{\eta}d\eta  + C \xi \int_{3\xi}^{\infty} \frac{m(\eta^{-1}) \omega(\eta)}{\eta^{2}} d\eta \label{eq:u:moc:2}
\end{align}
for all $\xi \neq 0$. The final result then follows from
\eqref{eq:u:moc:2} using the concavity of $\omega$ and the
monotonicity of $m$.
\end{proof}

\noindent {\bf Acknowledgement.} \rm
MD was supported in part by the NSF grant DMS-0800243.
AK acknowledges support of the NSF grant DMS-1104415, and thanks Terry Tao for a useful discussion.

\end{document}